\theoremstyle{plain}
\numberwithin{equation}{section}
\newtheorem{thm}{Theorem}[section]
\newtheorem{prop}[thm]{Proposition}
\newtheorem{cor}[thm]{Corollary}
\newtheorem{lem}[thm]{Lemma}
\theoremstyle{definition}
\newtheorem{dfn}[thm]{Definition}
\newtheorem{rmk}[thm]{Remark}
\def\rank{\mathop{\mathrm{rank}}\nolimits}
\def\Hom{\mathop{\mathrm{Hom}}\nolimits}
\def\<{{\langle}}
\def\>{{\rangle}}
\def\Aut{\mathop{\mathrm{Aut}}\nolimits}
\def\+{\mathop{\oplus}\nolimits}
\def\FM{\mathop{\mathscr{F}\mspace{-8mu}\mathscr{M}}\mspace{-6mu}}
\newcommand{\mf}[1]{{\mathfrak{#1}}}
\newcommand{\mi}[1]{{\mathit{#1}}}
\newcommand{\bb}[1]{{\mathbb{#1}}}
\newcommand{\mca}[1]{{\mathcal{#1}}}
\newcommand{\mr}[1]{{\mathrm{#1}}}
\title[FM transformations and Atkin-Lehner involutions]{Fourier-Mukai transformations on K3 surfaces with $\rho =1$ and Atkin-Lehner involutions}
\author{Kotaro Kawatani}
\date{\today}
\address{Department of Mathematics, 
Nagoya University, 
Furocho, Chikusa-ku, Nagoya, Japan}
\email{kawatani@math.nagoya-u.ac.jp}
\subjclass[2010]{}
\begin{document}
\maketitle


\begin{abstract}
We show that there is a surjection from the Fourier-Mukai transformations on projective K3 surfaces with the Picard number $\rho (X)=1$ to so called to the group of Atkin-Lehner involutions. 
This was expected in Hosono-Lian-Oguiso-Yau's paper. 
\end{abstract}

\section{Introduction}

\subsection{Terminologies and backgrounds}
Let $D(M)$ be the bounded derived category of coherent sheaves on a projective manifold $M$. 
In this article a projective manifold $M'$ is said to be a \textit{Fourier-Mukai partner} of $M$ if there is an equivalence $\Phi \colon D(M') \to D(M)$. 
Any equivalence $\Phi \colon D(M_1) \to D(M_2)$ between Fourier-Mukai partners of $M$ is said to be a \textit{Fourier-Mukai transformation} on $M$. 
The number of isomorphic classes of Fourier-Mukai partners of $M$ is said to be the \textit{Fourier-Mukai number} of $M$. 
It is conjectured that the Fourier-Mukai number of any projective manifold is finite by Kawamata in \cite{Kaw02}.
For instance the conjecture holds for curves (For example see \cite[mainly in Chapter 5]{Huy}) and surfaces (\cite{BM01} and \cite{Kaw02}). 
And also, the conjecture holds for abelian varieties (essentially \cite{Orl} and independently \cite{Fav}). 

\subsection{The study of \cite{HLOYb}}

The main interest of this paper is the relation, which is predicted by \cite[Remark in page 25]{HLOYb}, between Atkin-Lehner involutions and the Fourier-Mukai number of projective K3 surfaces $X$ with $\rho (X) =1$. 
In the following we briefly recall the study of \cite{HLOYb}.

Suppose that $X$ is a projective K3 surface with $\mr{NS}(X)  = \bb Z L_X$ and with $L_X ^2 = 2d$. 
The numerical Grothendieck group $\mca N(X) $ of $X$ has the Mukai (or Euler) paring $\<-,-\>$ with the signature $(2, 1)$. 
Then as was shown by Dolgachev, the isometry group of $O^+ (\mca N(X))/ \pm \mr{id}$ is isomorphic to Atkin-Lehner modular group $\mr{AL}_d$ of level $d$ (See also Definition \ref{AL}). 

Now recall that any autoequivalence on $D(X)$ induces an isometry on $\mca N(X)$.  
Then we have a representation from $\Aut (D(X)) $ to $\mr{AL}_d$. 
By virtue of \cite[Corollary 3]{HMS} we see the image of this representation is Fricke modular group $\mr{Fr}_d$  which is a subgroup of $\mr{AL}_d $. 
Surprisingly \cite{HLOYb} showed that the index $[\mr{AL}_d: \mr{Fr}_d]$ is equal to the Fourier-Mukai number of $X$. 
Furthermore they predicts that all Atkin-Lehner involutions are obtained from Fourier-Mukai transformations $\Phi :D(Y) \to D(X)$ on $X$. 

\subsection{Our results}

In our main theorem, Theorem \ref{surjective}, we show that Hosono-Lian-Oguiso-Yau's conjecture holds. 
To formulate our results transparently we introduce the notion of the groupoid $\FM_X$ consisting of Fourier-Mukai transformations on $X$ (See Definition \ref{groupoid}). 
Moreover we construct an explicit correspondence between cosets of $\mr{AL}_d/ \mr{Fr}_d$ and Fourier-Mukai partners of $X$. 
Namely we have the surjective functor 
\[
\tilde \rho\colon \FM_X \to O^+(\mca N(X)),
\]
where $O^+(\mca N(X))$ is the orientation preserving isometry group of $\mca N(X)$. 

Now recall \cite[Corollary 3]{HMS}: 
There is a surjection 
$$\rho \colon \Aut (D(X)) \to O^+_{\text{Hodge}}(H^*(X, \bb Z))$$
(See also Theorem \ref{HMSthm}). 
If we restrict $\tilde \rho$ to $\Aut(D(X))$, this gives the representation $\rho$.  Hence our theorem can be regarded as a slight generalization of \cite[Corollary 3]{HMS}. 

\subsection*{Acknolegement}
The author was partially supported by Grant-in-Aid for Scientific Research (S), No 22224001.

\section{Preliminaries}
\subsection{Induced morphisms on $\bb H$}

To discuss the relation between Fourier-Mukai transformations and Atkin-Lehner modular group, we recall the representation of Fourier-Mukai transformations to $\mi{PSL}_2(\bb R)$, the automorphism group of the upper half plain $\bb H$.

We first consider the numerical Grothendieck group 
\[
\mca N(X) = H^0(X, \bb Z) \+ \mr{NS}(X) \+ H^4(X, \bb Z). 
\]
The \textit{Mukai paring} (or \textit{Euler paring}) on $\mca N(X)$ is given by 
\[
\< r\+ c \+ s, r' \+ c' \+ s' \> = c c' - r s' - s r'. 
\]
By the Hodge index theorem, the index of the Mukai paring is $(2, \rho(X))$. 

For objects $E \in D(X)$ we put $v(E) = ch(E) \sqrt{td_X}$ and call it the \textit{Mukai vector} of $E$. 
One can check that $v(E) = r\+ c\+ s \in \mca N(X)$ and see that $r = \rank E$, $c = c_1 (E)$ and $s = \chi (X,E) - \rank E$ by using Riemann-Roch theorem.

Let $\mf {D}^+(X)$ be one of the connected component 
\[
\{ [v] \in \bb P(\mca N(X) \otimes \bb C)  | v^2 =0, v \bar v >0 \}
\]
containing $[\exp(\sqrt{-1}\omega )]$ where $\omega$ is an ample divisor. 
As is well-known $\mf D^+(X)$ is isomorphic to the tube domain $\mr{NS}(X)_{\bb R} \times C^+(X)$ where $C^+(X)$ is the positive cone:
\[
\mr{NS}(X) _{\bb R} \times C^+(X) \ni (\beta, \gamma) \mapsto [\exp(\beta + \sqrt{-1} \gamma)] \in \mf{D}^+(X). 
\]
We remark that if $\rho (X) =1$, $\mf{D}^+(X)$ is canonically isomorphic to the upper half plain $\bb H$:
\[
\bb H \ni u+\sqrt{-1}v \mapsto [\exp\big( (u +\sqrt{-1}v)L\big)] \in \mf{D}^+(X),
\]
where $L$ is an ample basis of $\mr{NS}(X)$.

Now suppose that $X$ and $Y$ are K3 surfaces with $\rho(X)= \rho(Y)=1$ and $\Phi : D(Y) \to D(X)$ is an equivalence. 
We put the degree of $X$ and $Y$ by $2d$. 
Since $\Phi$ induces the orientation preserving isometry $\Phi ^N \colon \mca N(Y) \to \mca N(X)$ by \cite[Theorem 2]{HMS}, we obtain the morphism 
\[
\Phi _* \colon \mf{D}^+(Y) \to \mf D^+(X). 
\]
Since both $\mf{D}^+(Y)$ and $\mf{D}^+(X)$ are $\bb H$, we obtain the automorphism on $\bb H$ by using the canonical isomorphism:
\[
\Phi_*(u_Y + \sqrt{-1}v_Y) = u_X + \sqrt{-1}v_X. 
\]
This automorphism was calculated by the author \cite[Lemmas 3.1 and 3.2]{Kaw12}. 
To explain these lemmas, we set the following:
\[
v(\Phi (\mca O_{y})) = r_X \+ n_X L_X \+ s_X \text{ and } v(\Phi ^{-1}(\mca O_{x})) = r_Y \+ n_Y L_Y \+ s_Y. 
\]
Here $x \in X$ and $y \in Y$ are closed points. 
Then $\Phi _*$ is given as follows:

\begin{prop}[{\cite[Lemmas 3.1 and 3.2]{Kaw12}}]\label{Kawlem}
Let $\Phi \colon D(Y) \to D(X)$ be an equivalence between projective K3 surfaces with $\rho =1$ and let $\Phi _*$ be the induced automorphism on $\bb H$. 
\begin{enumerate}
\item We have $r = r_X=r_Y$. Moreover if $r_X =0$, then 
\[\Phi _*(u_Y+ \sqrt{-1}v_Y) = x _Y + m +\sqrt{-1} v_Y \]
for some $m \in \bb Z$. 
\item Suppose that $r \neq 0$. Then $\Phi _*$ is given by 
\[
\Phi_*(u_Y  + \sqrt{-1}v_Y) = \frac{1}{d|r|} \cdot \frac{-1}{(u_Y+ \sqrt{-1}v_Y)- \frac{n_Y}{r} } +\frac{n_X}{r}. 
\]
\end{enumerate}
\end{prop}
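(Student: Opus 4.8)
The plan is to work directly with the induced lattice isometry $\Phi^N\colon\mca N(Y)\to\mca N(X)$, exploiting the adjunction $\<\Phi^N(a),b\>_X=\<a,\Phi^{-N}(b)\>_Y$ for $a\in\mca N(Y)$, $b\in\mca N(X)$, where $\Phi^{-N}=(\Phi^N)^{-1}$ is the isometry induced by $\Phi^{-1}$. Identify $\mca N(Y)$ with $\bb Z^3$ via $r\+ cL_Y\+ s\leftrightarrow(r,c,s)$, so that $\<(r,c,s),(r',c',s')\>=2dcc'-rs'-sr'$, the Mukai vector of a skyscraper sheaf is $v(\mca O_y)=(0,0,1)$, and the point $\tau\in\bb H$ corresponds to $\exp(\tau L_Y)=(1,\tau,d\tau^2)$; likewise for $X$. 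In these terms $\Phi_*(\tau)=\tau'$ is the unique number with $\Phi^N\big(\exp(\tau L_Y)\big)=\mu\exp(\tau'L_X)$ for some $\mu\in\bb C^{\times}$, and $\tau'\in\bb H$ because $\Phi^N$ is orientation preserving.

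First I would record the purely formal consequences of $\Phi^N$ being an isometry. Writing $\Phi^N(0,0,1)=v(\Phi(\mca O_y))=(r_X,n_X,s_X)$, pairing with $v(\mca O_x)=(0,0,1)$ and applying adjunction gives $r_X=r_Y=:r$; and $\<\Phi^N(0,0,1),\Phi^N(0,0,1)\>=\<(0,0,1),(0,0,1)\>=0$ gives $r_Xs_X=dn_X^2$ and, symmetrically, $r_Ys_Y=dn_Y^2$. If $r=0$ these force $n_X=n_Y=0$, and since $(0,0,1)$ is primitive and $\Phi^N$ preserves primitivity, $s_X,s_Y\in\{\pm1\}$.

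Now assume $r\neq0$ and write $\Phi^N\big(\exp(\tau L_Y)\big)=(A,B,C)$, so $\mu=A$ and $\tau'=B/A$. By adjunction $A=-\<(0,0,1),\Phi^N(\exp\tau L_Y)\>_X=-\<v(\Phi^{-1}(\mca O_x)),\exp\tau L_Y\>_Y$, which after substituting $s_Y=dn_Y^2/r$ collapses to $A=dr\big(\tau-\frac{n_Y}{r}\big)^2$. Pairing $\Phi^N(\exp\tau L_Y)$ instead against $\Phi^N(0,0,1)$, the isometry property gives $\<(0,0,1),\exp\tau L_Y\>_Y=-1$ on one side and $-\mu\,dr\big(\tau'-\frac{n_X}{r}\big)^2$ on the other (using $s_X=dn_X^2/r$). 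Equating and cancelling yields $\big(dr(\tau-\frac{n_Y}{r})(\tau'-\frac{n_X}{r})\big)^2=1$, hence $dr(\tau-\frac{n_Y}{r})(\tau'-\frac{n_X}{r})=\varepsilon$ for a sign $\varepsilon$ that is constant in $\tau$ by connectedness of $\bb H$. Evaluating along the vertical ray $\tau=\frac{n_Y}{r}+\sqrt{-1}v$ with $v>0$, the constraint $\Im\tau'>0$ forces $\varepsilon=-\mr{sign}(r)$; since $\mr{sign}(r)/r=1/|r|$, this is exactly the asserted formula $\Phi_*(\tau)=\frac{n_X}{r}-\frac{1}{d|r|}\big(\tau-\frac{n_Y}{r}\big)^{-1}$.

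For $r=0$ the same pairing gives the constant $A=\mu=s_Y$, while the $\mr{NS}$-coordinate is $B=\frac{1}{2d}\<\Phi^{-N}(0,1,0),\exp\tau L_Y\>_Y$. The relations forced by $\Phi^{-N}$ being an isometry, applied to $(0,1,0)$ against $\Phi^{-N}(0,0,1)=(0,0,s_Y)$ and against itself, give $\Phi^{-N}(0,1,0)=(0,q,t)$ with $q\in\{\pm1\}$ and $t\in\bb Z$; thus $\Phi_*(\tau)=\frac{q}{s_Y}\tau+(\text{const})$, and $\Im\tau'>0$ then forces $q=s_Y$, so $\Phi_*$ is the translation $\tau\mapsto\tau+m$ with $m=-\frac{t}{2d}s_Y$. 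The step I expect to be the main obstacle is verifying $m\in\bb Z$: writing $\Phi^{-N}(1,0,0)=(s_Y,b,c)$ and using $\<(1,0,0),(0,1,0)\>_X=0$ gives $t=2db$ with $b\in\bb Z$, whence $m=-bs_Y\in\bb Z$. It is only at this last step that one genuinely invokes the integral structure of $\mca N(X)$ and $\mca N(Y)$, rather than merely the underlying real quadratic spaces; everything else is forced by the Mukai form together with the orientation and positivity conditions.
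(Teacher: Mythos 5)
Your proof is correct, and for part (2) it is in substance the same argument as the paper's: the paper phrases the computation through the central charge $Z_{(\beta,\omega)}$ and the Hodge isometry $\Phi^H$, obtaining $1=Z_{(\beta_Y,\omega_Y)}(\Phi^{-1}(\mca O_x))\cdot Z_{(\beta_X,\omega_X)}(\Phi(\mca O_y))$ and then fixing the sign by positivity of the imaginary part, which is exactly your identity $\big(dr(\tau-\tfrac{n_Y}{r})(\tau'-\tfrac{n_X}{r})\big)^2=1$ with the sign pinned down by $\Im\tau'>0$ (your detour through constancy of the sign on connected $\bb H$ and evaluation on the vertical ray is a harmless variant of the paper's pointwise sign argument). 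Where you genuinely diverge is the $r=0$ case of part (1): the paper does not reprove it but quotes \cite[Lemma 3.1]{Kaw12}, whose content is the stronger geometric statement that $Y\cong X$ and that $\Phi$ is numerically equivalent to $\otimes(mL_X)$ through an isomorphism $f\colon Y\to X$, from which the translation formula follows; you instead extract the translation $\tau\mapsto\tau+m$, $m\in\bb Z$, purely from the integral isometry $\Phi^N$ (isotropy forcing $n_X=n_Y=0$, primitivity forcing $s_Y=\pm1$, and the pairings of $\Phi^{-N}(1\+0\+0)$ and $\Phi^{-N}(0\+L\+0)$ forcing integrality of $m$). Your route is more elementary and self-contained for the statement as quoted, at the cost of not recovering the geometric facts ($Y\cong X$, $\Phi$ numerically a line-bundle twist) that the cited lemma provides and that are in the spirit of how the paper thinks of this case; conversely it makes transparent that only the lattice structure, orientation, and positivity are used. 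All the individual computations you record ($r_X=r_Y$ by adjunction, $rs=dn^2$ from isotropy, $A=dr(\tau-\tfrac{n_Y}{r})^2$, $q=s_Y$, $t=2db$) check out.
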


Original proof is written in terms of Bridgeland stability conditions on $X$. 
So, for the convenience of readers we write the proof. 

\begin{proof}
Recall $v(\mca O_x) = 0\+ 0\+ 1$. Then we see  
\[
-r_X = \< v(\Phi (\mca O_{y})), v(\mca O_{x})  \> = \< v(\mca O_{y}), v(\Phi^{-1}(\mca O_{x}))  \> = -r_Y. 
\]
Thus we see $r_X = r_Y$. 
Moreover, if $r_X =0$ then one can see that $Y$ is isomorphic to $X $ and that the equivalence $\Phi$ is numerically equivalent to $\otimes( m L_X)$ for some $m \in \bb Z$ via an isomorphism $f \colon Y\to X$ (The details are in \cite[Lemma 3.1]{Kaw12}).

The second assertion follows from \cite[Lemma 3.2]{Kaw12}. 
We recall the proof. 

One can easily see 
\[
\Phi ^H(\exp( (u_Y + \sqrt{-1} v_Y) L_Y)) = \lambda \exp((u_X + \sqrt{-1} v_X) L_X)
\]
for some $\lambda \in \bb C^*$.

Put $\beta_X + \sqrt{-1}\omega _X= u_X L_X + \sqrt{-1}v_X L_X$ (respectively $Y$). 
We can define the function $Z_{(\beta_X, \omega_X)}: \mca N(X) \to \bb C$, which is usually called a \textit{central charge}:
\begin{eqnarray*}
Z_{(\beta_X, \omega_X)} (E)	 &:=& \< \exp(\beta_X+ \sqrt{-1}\omega_X, v(E)) \>\\
							&=& \frac{v(E)^2}{2 r } + \frac{r}{2} 
								\Big(\omega_X + \sqrt{-1 }(\frac{c}{r} - \beta_X) \Big)^2,	
\end{eqnarray*}
where $v(E)  = r \+ c \+ s$. 
Then we see 
\begin{eqnarray*}
\lambda	&=&	- \< \Phi ^H (\exp(\beta _Y + \sqrt{-1} \omega _Y)), v(\mca O_{x})  \> \\
			&=&	-\< \exp(\beta _Y+ \sqrt{-1}\omega _Y) ,  v(\Phi ^{-1}(\mca O_{x}) ) \> \\
			&=& - Z_{(\beta _Y, \omega _Y)} (\Phi ^{-1}(\mca O_{x})), 
\end{eqnarray*}
and
\begin{eqnarray*}
-1		&=&	\< \exp(\beta _Y + \sqrt{-1}\omega _Y), v (\mca O_{y}) \>\\
		&=&	\< \Phi ^H(\exp(\beta _Y + \sqrt{-1}\omega _Y)), v(\Phi (\mca O_{y})) \> \\
		&=& \lambda \cdot Z_{(\beta _X, \omega _X)} (\Phi (\mca O_{y})). 
\end{eqnarray*}
Thus we have 
\[
1 = Z_{(\beta _Y, \omega _Y)}(\Phi ^{-1}(\mca O_{x}))  \cdot  Z_{(\beta _X, \omega _X)} (\Phi (\mca O_{y})) . 
\]

Since $v(\Phi (\mca O_{y}))^2= v(\Phi ^{-1}(\mca O_{x}))^2 =0$, we have 
\[
Z_{(\beta _Y, \omega _Y)} (\Phi ^{-1}(\mca O_{x})) = \frac{r}{2} \Big( v_Y+ \sqrt{-1} \big( \frac{n_Y}{r} - u_Y \big)   \Big)^2 L_Y^2
\]
and 
\[
Z_{(\beta _X, \omega _X)} (\Phi (\mca O_{y})) = \frac{r}{2} \Big( v_X+ \sqrt{-1} \big( \frac{n_X}{r} - u_X \big)   \Big)^2 L_X^2. 
\]
Since $L_X^2 = L_Y ^2 = 2d$ we see 
\begin{equation*}
(u_X- \frac{n_X}{r}) + \sqrt{-1}v_X = \frac{\pm 1}{d |r|} \cdot \frac{1}{(u_Y- \frac{n_Y}{r_Y})+\sqrt{-1}v_Y}.  \label{eqn1}
\end{equation*}
Since the left hand side is in the upper half plain $\bb H$, the imaginary part of the left hand side is positive. 
Hence we have
\[
(u_X- \frac{n_X}{r}) + \sqrt{-1}v_X = \frac{-1}{d |r|} \cdot \frac{1}{(u_Y- \frac{n_Y}{r})+\sqrt{-1}v_Y}.  
\]
Thus we have finished the proof. 
\end{proof}

\subsection{Atkin-Lehner and Fricke involutions}

In this section we recall the Atkin-Lehner involutions and Fricke involutions.  
As usual we put 
\[
\Gamma _0(d) = \{ \begin{pmatrix}\alpha & \beta \\ \gamma & \delta  \end{pmatrix}  \in \mi{PSL}_2 (\bb Z) | \gamma \in d\bb Z   \}. 
\]

For integers $s, d \in \bb Z$ we define the symbol $s || d$ by 
\begin{equation}
s || d \stackrel{\mr{def}}{\iff}  s | d \text{ and } \gcd (s, \frac{d}{s})=1. \label{s||d}
\end{equation}
Suppose that $s || d$. 
We put 
\[
W_s  = \{ \frac{1}{\sqrt{s}} \begin{pmatrix} \alpha & \beta \\ \gamma & \delta \end{pmatrix}  
\begin{pmatrix} s & 0 \\ 0 & 1  \end{pmatrix} \in \mi{PSL}_2(\bb R)   |  \gamma \in \frac{d}{s} \bb Z\text{ and } \delta \in  s  \bb Z \}. 
\]
$W_s$ is also given as 
\[
W_s  = \{ \begin{pmatrix} \alpha \sqrt{s}  & \frac{\beta}{\sqrt{s}} \\ \gamma \frac{d}{s} \sqrt{s} & \delta \sqrt{s} \end{pmatrix}  \in \mi{PSL}_2(\bb R)  | \alpha, \beta, \gamma \text{ and }\delta \in \bb Z \}. 
\]
In particular we see $W_1 = \Gamma _0(d)$. 

For cosets $W_s$ one can easily check the following:
\begin{lem}[\cite{CN79}]
Each $W_s$ is in the normalizer of $\Gamma _0(d)$ in $\mi{SL}_2(\bb R)$. 
In addition the coset classes $W_s$ and $W_{s'}$ satisfies the following rule:
\[
W_s ^2 = W_1, W_s W_{s'}  =W_{s'}W_s  = W_{s*s '}, 
\]
where $s * s' = \frac{s s'}{\gcd (s,s ')^2}$
\end{lem}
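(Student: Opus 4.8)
The plan is to convert the whole statement into bookkeeping with $2\times 2$ integer matrices. First I would rewrite the second description of $W_s$ as
\[
W_s=\Bigl\{\tfrac{1}{\sqrt s}N\ \Big|\ N=\begin{pmatrix}a&b\\c&e\end{pmatrix}\in M_2(\bb Z),\ \det N=s,\ s\mid a,\ d\mid c,\ s\mid e\Bigr\},
\]
a direct unwinding of the displayed formula (put $a=\alpha s$, $c=\gamma d$, $e=\delta s$, $b=\beta$, and note $\det N=s$ is the $\mi{SL}_2(\bb Z)$-condition $\alpha\delta s-\beta\gamma\frac ds=1$). Write $\Omega_s$ for the set of such $N$. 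I would record three elementary facts: (i) $\Omega_1$ is exactly the preimage of $\Gamma_0(d)$, re-proving $W_1=\Gamma_0(d)$; (ii) the adjugate $\mathrm{adj}(N)=\bigl(\begin{smallmatrix}e&-b\\-c&a\end{smallmatrix}\bigr)$ again lies in $\Omega_s$, so $\bigl(\tfrac1{\sqrt s}N\bigr)^{-1}=\tfrac1{\sqrt s}\mathrm{adj}(N)\in W_s$ and hence $W_s^{-1}=W_s$; (iii) $\Omega_s\neq\emptyset$ precisely when $\gcd(s,d/s)=1$ (solvability of $\alpha\delta s-\beta\gamma\frac ds=1$), which is the only place the hypothesis $s||d$ is used. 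Finally, setting $t=\gcd(s,s')$, I would note the arithmetic identities $s*s'=ss'/t^2=(s/t)(s'/t)$ and $\mathrm{lcm}(s,s')=ss'/t$, and that $s||d$, $s'||d$ force $s*s'||d$ (prime by prime: each prime power exactly dividing $d$ lies entirely in $s$ or is coprime to $s$, likewise for $s'$, hence for $s*s'$).

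The heart of the matter is the set equality $W_sW_{s'}=W_{s*s'}$ in $\mi{PSL}_2(\bb R)$. For $\subseteq$: given $N\in\Omega_s$, $N'\in\Omega_{s'}$, every entry of $NN'$ is a sum of products one factor of which is divisible by $t$ (since $t$ divides $s$, $s'$, hence also $d$, and therefore divides $a_N,e_N,a_{N'},e_{N'},c_N,c_{N'}$), so $P:=NN'/t\in M_2(\bb Z)$, $\det P=ss'/t^2=s*s'$, and $\tfrac1{\sqrt s}N\cdot\tfrac1{\sqrt{s'}}N'=\tfrac1{\sqrt{ss'}}NN'=\tfrac1{\sqrt{s*s'}}P$. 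A short divisibility check using $s\mid a_N$, $s'\mid a_{N'}$, $d\mid c_N,c_{N'}$, together with $\mathrm{lcm}(s,s')\mid d$, shows $(s*s')\mid a_P$, $(s*s')\mid e_P$ and $d\mid c_P$, i.e. $P\in\Omega_{s*s'}$. For $\supseteq$: by (iii) choose $g_0=\tfrac1{\sqrt s}N_0\in W_s$; for $k=\tfrac1{\sqrt{s*s'}}P\in W_{s*s'}$ one has $g_0^{-1}k=\tfrac1{\sqrt{s'}}Q$ with $Q=\tfrac{t}{s}\,\mathrm{adj}(N_0)\,P$, and one checks $Q$ is integral (modulo $s/t$, $\mathrm{adj}(N_0)$ reduces to its upper-right entry alone, while the bottom row $(c_P,e_P)$ of $P$ vanishes modulo $s/t$ since $d\mid c_P$ and $(s*s')\mid e_P$), that $\det Q=s'$, and that the three congruences for $\Omega_{s'}$ hold; thus $k=g_0(g_0^{-1}k)\in W_sW_{s'}$.

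The remaining assertions are now formal. Commutativity $W_sW_{s'}=W_{s'}W_s$ is immediate from $s*s'=s'*s$, and taking $s'=s$ (so $s*s=1$) gives $W_s^2=W_1=\Gamma_0(d)$. Each $W_s$ is moreover a single $\Gamma_0(d)$-coset: $g_0\Gamma_0(d)=g_0W_1\subseteq W_sW_1=W_s$, and conversely for $h\in W_s$ one has $g_0^{-1}h\in W_sW_s=\Gamma_0(d)$ by (ii), so $h\in g_0\Gamma_0(d)$. Finally, for $g\in W_s$ (hence $g^{-1}\in W_s$),
\[
g\,\Gamma_0(d)\,g^{-1}=g\,W_1\,g^{-1}\subseteq W_s\,W_1\,W_s=W_sW_s=\Gamma_0(d),
\]
and the same inclusion applied to $g^{-1}$ gives the reverse containment, so $g$ normalizes $\Gamma_0(d)$ in $\mi{PSL}_2(\bb R)$.

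I expect the main obstacle to be the reverse inclusion $W_{s*s'}\subseteq W_sW_{s'}$: it is the only step that genuinely uses $s||d$ (to produce the representative $g_0$), and showing that $Q=\tfrac{t}{s}\mathrm{adj}(N_0)P$ is an \emph{integral} matrix with determinant $s'$ and the right congruences forces one to track the three moduli $s$, $s'$, $d$ and their common factor $t=\gcd(s,s')$ simultaneously. The forward inclusion and all the deductions are routine once the matrix reformulation of $W_s$ is in hand.
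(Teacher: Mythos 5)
Your proof is correct, and there is nothing in the paper to compare it against: the paper does not prove this lemma at all, stating only that ``one can easily check'' the assertions and attributing them to Conway--Norton \cite{CN79}. Your matrix reformulation $W_s=\{\frac{1}{\sqrt s}N\}$ with $\det N=s$, $s\mid a$, $d\mid c$, $s\mid e$ is the standard way to carry out that check, and the delicate points all go through as you indicate: in the forward inclusion $W_sW_{s'}\subseteq W_{s*s'}$ the needed divisibilities follow exactly from $s\mid a_N,e_N$, $s'\mid a_{N'},e_{N'}$, $d\mid c_N,c_{N'}$ and $\mathrm{lcm}(s,s')=ss'/t\mid d$; in the reverse inclusion your matrix $Q=\frac ts\,\mathrm{adj}(N_0)P$ is indeed integral by the mod $s/t$ argument (using $s/t\mid s*s'$ and $s/t\mid d$), has determinant $s'$, and satisfies the three congruences for $\Omega_{s'}$, so $g_0^{-1}k\in W_{s'}$. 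You also correctly isolate the only places the hypothesis $s\,\|\,d$ enters (nonemptiness of $\Omega_s$, and $s*s'\,\|\,d$), and the deductions of $W_s^2=W_1$, commutativity, the single-coset property, and the normalizer statement from the product rule are all valid. The one cosmetic caveat, which you inherit from the paper itself, is the ambiguity between $\mi{SL}_2(\bb R)$ and $\mi{PSL}_2(\bb R)$: the sets $\Omega_s$ are stable under $N\mapsto -N$, so working projectively as you do is harmless, but strictly speaking the lemma's phrase ``normalizer in $\mi{SL}_2(\bb R)$'' should be read modulo $\pm\mathrm{id}$, consistently with the definition of $W_s$ and the remark following the lemma.
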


\begin{dfn}\label{AL}
We put
\[
\mr{AL}_d := \bigsqcup_{s || d} W_s
\text{ and }
\mr{Fr}_d  := W_1 \sqcup  W_d. 
\]
We call $\mr{AL}_d$ and $\mr{Fr}_d$ respectively the \textit{Atkin-Lehner modular group} and the \textit{Fricke modular group}. 
\end{dfn}

\begin{rmk}
By the above lemma we see both sets $\mr{AL}_d$ and $\mr{Fr}_d$ have group structures. 
Moreover, $\mr{AL}_d$ is the abelian normalizer group of $\Gamma _0(d)$ in $\mi{PSL}_2(\bb R)$. 
Since $W_s W_d = W_{\frac{d}{s}}$, the coset decomposition of $\mr{AL}_d/ \mr{Fr}_d$ is given by $$ \mr{AL}_d/ \mr{Fr}_d = \bigsqcup_{s || d} (W_s \sqcup W_{\frac{d}{s}}).$$
\end{rmk}

\subsection{An explicit construction of Fourier-Mukai partners of $X$}

In this subsection we recall the work of \cite{HLOYa} which is an explicit construction of Fourier-Mukai partners of $X$ with $\mr{NS}(X) = \bb Z L$. 
Put $L ^2 =2d$ as usual. 

We set the set $P_d$ by 
\[
P_d =\{ r \in \bb N | r ||d \}/\sim
\]
where $r_1 \sim r_2$ if and only if $r_1 = r_2$ or $r_1 = \frac{d}{r_2}$. 

\begin{thm}[{\cite[Theorem 2.1]{HLOYa}}]\label{HLOYathm}
Let $X$ be a projective K3 surface with $\mr{NS}(X)  = \bb Z L$. Put $L^2 =2d$.  
There is a one to one correspondence between $P_d$ and the set $\mr{FM}_X$ of isomorphic classes of Fourier-Mukai partners of $X$:
\[
P_d \ni r \mapsto M_L(r \+L \+ \frac{d}{r}) \in \mr{FM}_X. 
\]
Here $M_L(r\+ L \+s)$ is the fine moduli space of $\mu_L$-stable sheaves with Mukai vector $r\+ L \+s$. 
\end{thm}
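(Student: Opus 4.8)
The plan is to realise both sides of the claimed bijection through moduli of sheaves on $X$ and then to reduce the matching to the arithmetic of $\mr{AL}_d$, using Theorem \ref{HMSthm} and Proposition \ref{Kawlem}. First I would check that for $r||d$ the Mukai vector $v_r:=r\+L\+\frac{d}{r}$ is primitive (its $\mr{NS}$-coordinate is $L$, whose coefficient is $1$) and isotropic, $\<v_r,v_r\>=L^2-2r\cdot\frac{d}{r}=0$. Since $\rho(X)=1$ the polarisation $L$ is forced and slope and Gieseker stability agree for sheaves with first Chern class $L$, so by Mukai's theorem $M_L(v_r)$ is a smooth projective K3 surface of dimension $\<v_r,v_r\>+2=2$. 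A short divisibility computation shows that $M_L(r'\+nL\+s')$ is a fine moduli space exactly when $\gcd(\gcd(r',s'),2d)=1$; for $v_r$ this reads $\gcd(\gcd(r,\frac{d}{r}),2d)=1$, which holds precisely because $r||d$ means $\gcd(r,\frac{d}{r})=1$. Hence there is a universal sheaf on $X\times M_L(v_r)$, the induced transform $D(M_L(v_r))\to D(X)$ is an equivalence, and $M_L(v_r)\in\mr{FM}_X$; so $r\mapsto M_L(v_r)$ is defined.

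Next I would show every partner arises this way and decide when two such moduli spaces are isomorphic. By the structure theory of Fourier-Mukai equivalences of K3 surfaces (Orlov's derived Torelli theorem and Mukai's realisation of partners as moduli of stable sheaves), each $Y\in\mr{FM}_X$ is isomorphic to a fine $M_L(v)$ for some primitive isotropic $v\in\mca N(X)$, the polarisation again being forced. Moreover $M_L(v)\cong M_L(v')$ if and only if $v,v'$ lie in one orbit of the image $\rho(\Aut(D(X)))\subset O^+(\mca N(X))$: in one direction, composing the tautological equivalences $D(M_L(v))\to D(X)$, $D(M_L(v'))\to D(X)$ with an isomorphism $M_L(v)\xrightarrow{\sim}M_L(v')$ gives an autoequivalence of $X$ carrying $v$ to $v'$; in the other, if $v'=\psi^N(v)$ then by Theorem \ref{HMSthm} the isometry $\psi^N$ extends to a Hodge isometry of $H^*(X,\bb Z)$, which through Mukai's identification $H^2(M_L(v),\bb Z)\cong v^\perp/\bb Z v$ produces a Hodge isometry $H^2(M_L(v),\bb Z)\cong H^2(M_L(v'),\bb Z)$; since $\mr{NS}$ here has rank one — no $(-2)$-curves, and each ample cone is a half-line — one adjusts by $\pm\mr{id}$ to respect ampleness and concludes by the Torelli theorem. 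Thus $\mr{FM}_X$ is in bijection with the orbits of $\rho(\Aut(D(X)))$ on the fine primitive isotropic vectors of $\mca N(X)$.

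Finally I would identify this orbit set with $P_d$. With $\mca N(X)$ isometric to $U\oplus\<2d\>$, the primitive isotropic vectors up to sign biject with the rational boundary points $\bb Q\cup\{\infty\}$ of $\bb H=\mf D^+(X)$, and under the identification of Proposition \ref{Kawlem} (extended to the boundary) the vector $v_r$ corresponds to the point $1/r$. By \cite[Corollary 3]{HMS} the image of $\rho$ in $\mr{AL}_d=O^+(\mca N(X))/\pm$ is $\mr{Fr}_d=\Gamma_0(d)\sqcup W_d$, and $-\mr{id}$ comes from the shift, so the orbit set is that of $\mr{Fr}_d$ acting on the fine boundary points. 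Combining the fineness criterion with the classical normal form for cusps of $\Gamma_0(d)$ (every cusp is $\Gamma_0(d)$-equivalent to some $a/c$ with $c\mid d$), one checks that every fine boundary point is $\Gamma_0(d)$-equivalent to a unique $1/r$ with $r||d$, and that the Fricke involution $W_d\colon z\mapsto -1/(dz)$ carries $1/r$ to $1/(d/r)$. Hence the $\mr{Fr}_d$-orbits of fine points are exactly $\{r||d\}/(r\sim d/r)=P_d$, with representatives $v_r$, and $r\mapsto M_L(v_r)$ is the desired bijection $P_d\xrightarrow{\sim}\mr{FM}_X$.

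The hard part will be the geometric input behind the second step — that every Fourier-Mukai partner of a K3 surface is an \emph{untwisted} fine moduli space of stable sheaves, and the "if" half of the isomorphism criterion, where one must not appeal to preservation of stability under autoequivalences (spherical twists do not preserve it) but route the argument through Mukai's period identification and the Torelli theorem. If one is willing to invoke \cite{HLOYb}'s index computation $|\mr{FM}_X|=[\mr{AL}_d:\mr{Fr}_d]=|P_d|$, both difficulties can be bypassed: only the "only if" direction is then needed, so that $M_L(v_r)\cong M_L(v_{r'})$ forces $r'\in\{r,d/r\}$, whence $\{r||d\}\to\mr{FM}_X$ has fibres of size at most $2$, image of size at least $|P_d|=|\mr{FM}_X|$, and so is surjective with fibres exactly $\{r,d/r\}$ — again the bijection $P_d\xrightarrow{\sim}\mr{FM}_X$. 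The leftover arithmetic — distinctness of the cusps $1/r$ for $r||d$, the action of $W_d$ on them, and the bookkeeping with the $2$ in $\gcd(\cdot,2d)$ — is elementary but must be handled carefully.
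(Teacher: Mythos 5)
There is nothing in the paper to compare against here: Theorem \ref{HLOYathm} is imported verbatim from \cite[Theorem 2.1]{HLOYa} and the paper gives no proof of it, so your attempt can only be measured against the original argument of Hosono--Lian--Oguiso--Yau. Your outline is essentially sound and is in the same spirit as theirs: realize every Fourier--Mukai partner as a fine two-dimensional moduli space $M_L(v)$ with $v$ primitive isotropic (Mukai plus Orlov's derived Torelli theorem -- you correctly flag this as the heavy geometric input), reduce the isomorphism question to an orbit problem for primitive isotropic vectors in $\mca N(X)$, and settle that orbit problem arithmetically. Your checks in step 1 are fine (note only that the fineness criterion for $v=r'\+nL\+s'$ should read $\gcd(r',s',2dn)=1$; your formula is the special case $n=1$, which is all you use), and both directions of the isomorphism criterion work as you describe: the ``only if'' by composing the tautological equivalences, the ``if'' by Mukai's identification $H^2(M_L(v),\bb Z)\cong v^\perp/\bb Z v$ and the Torelli theorem, using $\rho=1$ to adjust by $\pm\mr{id}$. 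Where you differ from the original is the last step: you run the orbit count through this paper's dictionary (Dolgachev's Theorem \ref{Dolthm}, Proposition \ref{DHMS}, Theorem \ref{HMSthm}) and the cusp theory of $\Gamma_0(d)$, which is anachronistic relative to \cite{HLOYa} but not circular, and the cusp bookkeeping you defer does close up: a cusp $a/c$ with $c\mid d$ corresponds to a fine vector iff $\gcd(c,d/c)=1$, in which case it is $\Gamma_0(d)$-equivalent to $1/c$, and $W_d$ exchanges $1/r$ and $1/(d/r)$. Two small caveats: the appeal to Proposition \ref{Kawlem} for the boundary identification is misplaced (what you actually need is the tube-domain identification of $\mf D^+(X)$ with $\bb H$ and the compatibility of the $O^+(N_d)$-action with the M\"obius action via Dolgachev's map $R$); and in your shortcut via the index $[\mr{AL}_d:\mr{Fr}_d]=|\mr{FM}_X|$ you should quote Oguiso's counting \cite{Ogu} rather than \cite{HLOYb} to stay safely clear of circularity. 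Also note that Theorem \ref{HMSthm} is not really needed for the ``if'' direction: a Hodge isometry of $H^*(X,\bb Z)$ gluing $\varphi\in O^+(\mca N(X))^*$ with the identity on $T(X)$ suffices for the Torelli argument, without producing an actual autoequivalence.
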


\subsection{Lattices and modular groups}

The aim of this subsection is to recall Dolgachev's theorem. 

Let $N_d = \bb Z e_0 \+ \bb Z \ell \+ \bb Z e_4$ be the abstract lattice with the intersection matrix $\Sigma$ where  
\[
\Sigma = \begin{pmatrix} 0 & 0 & -1 \\ 0 & 2d & 0 \\ -1 & 0 & 0  \end{pmatrix}. 
\]
Let $O(N_d)$ be the orthogonal group of $N_d$:
\[
O(N_d) = \{ g \in \mi{GL}_3(\bb Z) | {}^{t}g \Sigma g = \Sigma \}. 
\]
Put $O^+(N_d)$ be the subgroup consisting of $g \in O(N_d)$ which preserves the orientation of positive $2$ plane in $N _{\bb R} = N_d \otimes _{\bb Z} \bb R$. 
Since the intersection form is non-degenerate, we see $N_d \subset N_d^{\vee} = \Hom(N_d, \bb Z)$ in $N_{\bb R}$. 
Hence $g \in O(N_d)$ induces the isometry on the discriminant lattice $A_{N_d} = N_d^{\vee}/ N_d$ with respect to the natural quadratic form. 
We define $O(N_d)^*$ by the kernel of the morphism $O(N_d) \to O (A_{N_d})$ and define $O^+(N_d)^* = O^+(N_d) \cap O(N_d)^*$.

Now put $\mi{SO}^+(N_d) = \{  g\in  O^+(N_d) | \det g =1  \}$. 
Then $\mi{SO}^+(N_d)$ is isomorphic to $\mi{PSL}(2, \bb R)$ by the following morphism 
\[
R:\mi{PSL}(2, \bb R) \to \mi{SO}^+(N_d), 
\begin{pmatrix} \alpha & \beta \\ \gamma & \delta \end{pmatrix}
\mapsto
\begin{pmatrix} 
\delta ^2 & 2 \gamma \delta  & \frac{1}{d}\gamma ^2 \\ 
\beta \delta & \alpha \delta + \beta \gamma & \frac{1}{d} \alpha \gamma \\
d \beta ^2 & 2d \alpha \beta & \alpha ^2
\end{pmatrix}. 
\]
Then we have the following sequence of morphisms:
\[
q: O^+(N_d) \to O^+(N_d) / \pm{\mr{id}_{N_d}} \stackrel{\sim}{\to} \mi{SO}^+(N_d) \stackrel{R^{-1}}{\to} \mi{PSL}(2, \bb R). 
\]

In this situation Dolgachev proves the following:

\begin{thm}[{\cite[Theorem 7.1 and Remark 7.2]{Dol}}]\label{Dolthm}
The image $q(O^+(N_d)^*)$ of $O^+(N_d)^*$ is the Fricke modular group $\mr{Fr}_d$ and the image $q (O^+(N_d))$ of $O^+(N_d)$ is the Atkin-Lehner modular group. 
\end{thm}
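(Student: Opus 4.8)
The plan is to read off the image of $q$ directly from the explicit formula for $R(M)$, $M=\bigl(\begin{smallmatrix}\alpha&\beta\\\gamma&\delta\end{smallmatrix}\bigr)\in\mi{PSL}_2(\bb R)$, after two elementary reductions. Since $R$ identifies $\mi{PSL}_2(\bb R)$ with the identity component of $\mi{SO}(N_d\otimes\bb R)$, every matrix in the image of $R$ automatically has determinant $1$, preserves the orientation of a positive $2$-plane, and satisfies ${}^{t}R(M)\,\Sigma\,R(M)=\Sigma$; hence $R(M)\in\mi{SO}^+(N_d)$ if and only if $R(M)\in\mi{GL}_3(\bb Z)$. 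Also $-\mr{id}_{N_d}$ has determinant $(-1)^3=-1$, so $O^+(N_d)=\mi{SO}^+(N_d)\sqcup(-\mr{id}_{N_d})\cdot\mi{SO}^+(N_d)$ and $q$ factors as $O^+(N_d)\to\mi{SO}^+(N_d)\stackrel{R^{-1}}{\to}\mi{PSL}_2(\bb R)$. Therefore $q(O^+(N_d))=\{M:R(M)\in\mi{GL}_3(\bb Z)\}$, while $q(O^+(N_d)^*)$ consists of those $M$ for which $R(M)$ acts on the discriminant group $A_{N_d}\cong\bb Z/2d\bb Z$ as $\pm\mr{id}$. (The degenerate case $d=1$, where $A_{N_1}$ has no nontrivial isometry and $\mr{AL}_1=\mr{Fr}_1=\Gamma_0(1)=\mi{PSL}_2(\bb Z)$, is classical and is treated separately.) So everything reduces to an integrality computation together with a computation of the action on $A_{N_d}$.

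For the inclusions ``$\supseteq$'' I would substitute the matrix description of $W_s$ --- entries $a\sqrt s$, $b/\sqrt s$, $c\,\tfrac ds\sqrt s$, $e\sqrt s$ with $a,b,c,e\in\bb Z$ --- into the nine entries of $R(\cdot)$; using only $s\| d$, every entry comes out an integer, so $R(W_s)\subseteq\mi{GL}_3(\bb Z)$, hence $W_s\subseteq q(O^+(N_d))$ for every $s\| d$ and $\mr{AL}_d\subseteq q(O^+(N_d))$. For such an $M$ the middle column $(2\gamma\delta,\ \alpha\delta+\beta\gamma,\ 2d\alpha\beta)$ of $R(M)$ shows that $R(M)$ acts on $A_{N_d}=\langle\tfrac{1}{2d}\ell\rangle$ by multiplication by $\sigma_s:=\alpha\delta+\beta\gamma\bmod 2d$; combining $\alpha\delta+\beta\gamma\equiv\sigma_s$ with $\det M=\alpha\delta-\beta\gamma=1$ gives $\sigma_s\equiv-1\pmod{2s}$ and $\sigma_s\equiv1\pmod{2d/s}$. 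In particular $\sigma_1=1$ and $\sigma_d=-1$, so $R(W_1)\subseteq O^+(N_d)^*$ and $-R(W_d)\subseteq O^+(N_d)^*$; since $q$ kills $\pm\mr{id}_{N_d}$, this gives $\mr{Fr}_d=W_1\sqcup W_d\subseteq q(O^+(N_d)^*)$. For $s\ne1,d$ the congruences force $\sigma_s\not\equiv\pm1\pmod{2d}$, so no representative of $W_s$ lies in $\pm O(N_d)^*$; hence $q(O^+(N_d)^*)\cap\mr{AL}_d=\mr{Fr}_d$.

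The remaining, and main, step is the reverse inclusion $q(O^+(N_d))\subseteq\mr{AL}_d$: that $R(M)\in\mi{GL}_3(\bb Z)$ (with $\det M=1$) forces $M\in W_s$ for some $s\| d$. I would argue $p$-adically. The entries of $R(M)$ give $\alpha^2,\delta^2\in\bb Z$, $d\beta^2\in\bb Z$, $\gamma^2/d\in\bb Z$, $\beta\delta\in\bb Z$, $\alpha\gamma\in d\bb Z$, $\alpha\delta+\beta\gamma\in\bb Z$, and together with $\alpha\delta-\beta\gamma=1$ these already force $\alpha\delta,\beta\gamma\in\bb Z$ (they lie in $\tfrac12\bb Z$ and their squares are integers). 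A prime-by-prime bookkeeping of the $2$-adic and $p$-adic valuations of $\alpha^2,\beta^2,\gamma^2,\delta^2$ then exhibits a unique $s\| d$ for which $\alpha/\sqrt s$, $\delta/\sqrt s$, $\sqrt s\,\beta$, $\sqrt s\,\gamma/d$ are all integral, i.e. $M\in W_s$. (Conceptually this is the classical description of the normaliser of $\Gamma_0(d)$: identifying $N_d\otimes\bb Q$ with a space of binary quadratic forms on which $R$ becomes the substitution action of $\mi{PSL}_2$, $q(O^+(N_d))$ is exactly the subgroup preserving the relevant integral lattice of forms, namely $\mr{AL}_d$.) Granting this, $q(O^+(N_d))=\mr{AL}_d$ by the second paragraph, and then $q(O^+(N_d)^*)\subseteq q(O^+(N_d))\cap\mr{AL}_d\cap\{\,\text{acting by }\pm\mr{id}\text{ on }A_{N_d}\,\}=\mr{Fr}_d$, so $q(O^+(N_d)^*)=\mr{Fr}_d$. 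One can double-check the index by Nikulin's theorem that $O^+(N_d)\to O(A_{N_d})$ is surjective with kernel $O^+(N_d)^*$ (as $N_d$ contains a unimodular hyperbolic plane) together with $\#O(A_{N_d})=\#\{a\bmod 2d:a^2\equiv1\pmod{4d}\}=2^{\omega(d)}$, where $\omega(d)$ is the number of prime divisors of $d$, which for $d>1$ yields $[q(O^+(N_d)):q(O^+(N_d)^*)]=2^{\omega(d)-1}=[\mr{AL}_d:\mr{Fr}_d]$; but this alone does not single out $\mr{AL}_d$, so the $p$-adic integrality argument is the crucial ingredient.
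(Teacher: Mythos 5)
The paper does not actually prove Theorem \ref{Dolthm} --- it is quoted from Dolgachev \cite{Dol} --- so your proposal has to be judged on its own. Your reductions are correct: since $R$ maps $\mi{PSL}_2(\bb R)$ isomorphically onto the identity component of the real orthogonal group of $\Sigma$, membership of $R(M)$ in $\mi{SO}^+(N_d)$ is indeed equivalent to integrality of the nine entries, and $-\mr{id}_{N_d}$ (which lies in $O^+$ and is killed by $q$) accounts for the determinant $-1$ part; the substitution of the parametrization of $W_s$ into $R$ does give integer entries, so $\mr{AL}_d\subseteq q(O^+(N_d))$; and the computation of the action on $A_{N_d}\cong\bb Z/2d\bb Z$ is right --- for an element of $W_s$ the multiplier is $\sigma=s\alpha\delta+\tfrac ds\beta\gamma$ in the integral parameters (not the literal entries of $M$, a small notational slip), and $\det=1$ gives $\sigma\equiv-1\ (\mathrm{mod}\ 2s)$, $\sigma\equiv1\ (\mathrm{mod}\ 2d/s)$, which correctly singles out $W_1\sqcup W_d$ as the part of $\mr{AL}_d$ meeting $\pm O(N_d)^*$.

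The gap is in the step you yourself flag as crucial: the inclusion $q(O^+(N_d))\subseteq\mr{AL}_d$, i.e.\ that integrality of $R(M)$ forces $M\in W_s$ for some $s\|d$, is asserted (``prime-by-prime bookkeeping'') but not carried out, and this inclusion is essentially the whole content of Dolgachev's theorem; the rest is routine verification. The bookkeeping does in fact go through from exactly the data you list, so the gap is one of execution rather than of strategy: set $A=\alpha^2$, $D=\delta^2$, $y=d\beta^2$, $z=\gamma^2/d$, all nonnegative integers, and observe that $AD=(\alpha\delta)^2$, $yz=(\beta\gamma)^2$, $yD/d=(\beta\delta)^2$ and $Az/d=(\alpha\gamma/d)^2$ are squares of integers, while $\alpha\delta-\beta\gamma=1$ makes $\alpha\delta$ and $\beta\gamma$ coprime. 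Hence for each prime $p\mid d$ with $p^k\|d$ either $v_p(A)=v_p(D)=0$, in which case the squares $yD/d$, $Az/d$ force $v_p(y),v_p(z)\ge k$ with $v_p(y)\equiv v_p(z)\equiv k\ (\mathrm{mod}\ 2)$ and one puts $v_p(s)=0$, or $v_p(y)=v_p(z)=0$, in which case $v_p(A),v_p(D)\ge k$ with the parity of $k$ and one puts $v_p(s)=k$; for $p\nmid d$ all four valuations are even. Then $A/s$, $D/s$, $ys/d$, $zs/d$ are perfect squares, which is exactly $M\in W_s$; the degenerate cases where one of $\alpha,\beta,\gamma,\delta$ vanishes land directly in $W_1$ or $W_d$. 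Without this (or an equivalent) argument written out, the proposal proves only $\mr{AL}_d\subseteq q(O^+(N_d))$ and $\mr{Fr}_d\subseteq q(O^+(N_d)^*)$, not the stated equalities; your concluding remark is right that the Nikulin-type index count cannot substitute for it.
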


\subsection{Modular groups and autoequivalences on $D(X)$}

By using Dolgachev's theorem and the theorem of \cite{HMS} (below), we discuss the relation between the Fricke modular group and the autoequivalence group $\Aut (D(X))$. 

Since $X$ is a K3 surface, any autoequivalence $\Phi \in \Aut (D(X))$ induces the Hodge isometry $\Phi^H$ of the integral cohomology ring $H^*(X, \bb Z)$. 

\begin{thm}[{\cite[Corollary 3]{HMS}}]\label{HMSthm}
Let $X$ be a projective K3 surface (not necessary Picard rank $1$). 
Then the morphism 
\[
\rho : \Aut (D(X)) \to O_{\mi{Hodge}}(H^*(X, \bb Z)), \Phi \mapsto \Phi ^H
\]
is surjective to the orientation preserving isometry group $O^+_{\mi{Hodge}}(H^*(X, \bb Z))$. 
\end{thm}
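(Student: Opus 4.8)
This is \cite[Corollary 3]{HMS}; I indicate the strategy. Write $\mca N(X) = H^0 \+ \mr{NS}(X) \+ H^4$ for the algebraic part of the Mukai lattice $H^*(X,\bb Z)$ and $T(X)$ for the transcendental lattice, so that by Nikulin's gluing theory an element of $O_{\mi{Hodge}}(H^*(X,\bb Z))$ is the same as a pair consisting of an isometry of $\mca N(X)$ and a Hodge isometry of $T(X)$ inducing the same map on the common discriminant form. The assertion breaks into the inclusion $\rho(\Aut(D(X))) \subseteq O^+_{\mi{Hodge}}(H^*(X,\bb Z))$ and its reverse, and these are of very different natures.

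For the first inclusion I would use that every $\Phi \in \Aut(D(X))$ is of Fourier--Mukai type (Mukai, Orlov), so that $\Phi^H$ is a well defined Hodge isometry of $H^*(X,\bb Z)$; only the orientation of the positive definite four-plane spanned by $\mr{Re}\,\sigma$, $\mr{Im}\,\sigma$ (with $\sigma$ a holomorphic two-form) and $\mr{Re}\,\exp(\sqrt{-1}\omega)$, $\mr{Im}\,\exp(\sqrt{-1}\omega)$ ($\omega$ ample) is at issue. Since ``$\Phi^H$ is orientation preserving'' is stable under composition and under deformation of the Fourier--Mukai kernel, it suffices to check it for a class of equivalences to which an arbitrary autoequivalence can be reduced after composing with standard ones and spreading out over a (twistor) deformation family of $X$: the shift $[1]$, acting by $-\mr{id}$, which is orientation preserving because the positive part has even rank $4$; the twists $-\otimes L$ for $L \in \mr{Pic}(X)$, whose cohomological action is unipotent and hence lies in the identity component of the real orthogonal group; the push-forwards $f_*$ for $f \in \Aut(X)$, which preserve the orientation of the positive cone and hence of the four-plane; and the spherical twist $T_{\mca O_X}$, whose action is the reflection in $v(\mca O_X) = 1 \+ 0 \+ 1$, a vector of negative square, which fixes pointwise a positive definite four-plane inside $v(\mca O_X)^\perp$ and is therefore orientation preserving. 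The genuinely non-formal content --- and this is the \emph{main obstacle}, being the heart of \cite{HMS} --- is exactly the reduction to such standard equivalences: it requires Fourier--Mukai kernels over deformation families, the deformation theory of derived equivalences, and a delicate sign analysis ruling out the orientation-reversing alternative.

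For the reverse inclusion, given $g \in O^+_{\mi{Hodge}}(H^*(X,\bb Z))$, write $g = (g_N, g_T)$ with $g_N \in O(\mca N(X))$ and $g_T \in O_{\mi{Hodge}}(T(X))$ compatible on the discriminant. I would realize $g_T$ first: $O_{\mi{Hodge}}(T(X))$ is finite, and modulo $\pm \mr{id}$ it is a cyclic group of roots of unity acting on $H^{2,0}(X)$, so that (after correcting by an isometry supported on $\mca N(X)$) it is realized by $[1]$ together with a non-symplectic automorphism of $X$ furnished by the Torelli theorem. It then remains to realize an arbitrary element of $O^+(\mca N(X))$, and here I would use that this group is generated by $B$-field shifts $\exp(c_1(L))$ (realized by $- \otimes L$), reflections in spherical $(-2)$-classes (realized, via Mukai's and Yoshioka's classification of rigid bundles, by spherical twists, composed with line bundle twists to normalise the rank when necessary), isometries induced by $\Aut(X)$ (realized by $f_*$), and the ``hyperbolic'' isometries exchanging the two isotropic summands, realized by Fourier--Mukai transforms along universal families on two-dimensional moduli spaces $M_L(v) \cong X$ --- Mukai and Yoshioka guarantee that such a moduli space exists for every primitive isotropic $v \in \mca N(X)$ in the appropriate component, and Orlov's derived Torelli theorem lets one compose across Fourier--Mukai partners so as to land back in $\Aut(D(X))$. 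Matching these generators against $g$ produces a $\Phi$ with $\Phi^H = g$.

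In short, I expect the hard step to be the first inclusion: the passage from the classically known image description (Orlov, Mukai) to the statement that it lies in the index-two subgroup $O^+$ is precisely \cite{HMS}, and rests on the deformation argument above. The surjectivity onto $O^+$ is, by comparison, a bookkeeping matter: decompose the target isometry with Nikulin's gluing, express it through standard generators of the orthogonal group, and realise each generator by a concrete autoequivalence --- shift, line bundle twist, automorphism, spherical twist, or moduli-space Fourier--Mukai transform --- invoking the Torelli theorem and the Mukai--Yoshioka theory where existence is needed.
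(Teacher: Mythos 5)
The paper offers no proof of this statement at all: it is quoted verbatim as \cite[Corollary 3]{HMS} and used as a black box (the only argument in the paper building on it is Proposition \ref{DHMS}), so there is no internal proof to compare your attempt against. As a roadmap of the literature your sketch is essentially accurate and correctly locates the difficulty: the inclusion $\rho(\Aut(D(X)))\subseteq O^+_{\mi{Hodge}}(H^*(X,\bb Z))$ is the genuinely hard deformation-theoretic theorem of Huybrechts--Macr\'i--Stellari (kernels in families, twisted K3 surfaces, reduction to standard equivalences), while the surjectivity onto $O^+$ is the older part due to Mukai, Orlov, Hosono--Lian--Oguiso--Yau and Ploog, realized by the standard equivalences you list, and your orientation checks for $[1]$, $-\otimes L$, $f_*$ and $T_{\mca O_X}$ are all correct. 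Two caveats on the surjectivity half: the way you handle the transcendental part (``realize $g_T$ first by a non-symplectic automorphism furnished by Torelli'') is not how the standard argument runs and would not work as stated, since an arbitrary Hodge isometry of $T(X)$ need not be induced by an automorphism of $X$; the usual reduction instead composes $g$ with a Fourier--Mukai equivalence coming from a two-dimensional moduli space so as to move $g(0\+0\+1)$ back to $0\+0\+1$, which forces the resulting isometry to come from a Hodge isometry of $H^2(X,\bb Z)$, and only then invokes the strong Torelli theorem after correcting by $(-2)$-reflections (spherical twists) and shifts. Likewise the generation statement you invoke for $O^+$ of the Mukai lattice is itself a nontrivial lattice-theoretic input that would need a reference. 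Since the paper treats the theorem purely as a citation, these are remarks on your sketch of \cite{HMS} rather than discrepancies with the paper.
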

In particular we obtain the isometry $\Phi^H |_{\mca N(X)}$ on the numerical Grothendieck group $\mca N(X)$. 
Suppose $\rho (X) =1$ with $L_X^2 =2d$. 
Since $\mca N(X)$ is canonically isomorphic to the abstract lattice $N_d$. 
Thus we have the morphism:
\begin{equation}
M:\Aut (D(X)) \stackrel{\rho}{\to} O^+(H^*(X, \bb Z)) \stackrel{ | _{N_d}}{\to} O^+(N_d) \stackrel{q}{\to} \mi{PSL}_2(\bb R).  \label{M}
\end{equation}

By combining the above two theorems, we obtain the following proposition. 

\begin{prop}\label{DHMS}
The morphism $M$ is surjective to the Fricke modular group $\mr{Fr}_d$. 
\end{prop}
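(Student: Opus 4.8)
The plan is to read off the image of the composite $M$ from \eqref{M} by locating the image of its middle arrow, the restriction to $N_d$, on the subgroup $O^+_{\mi{Hodge}}(H^*(X,\bb Z))$. Since $\rho$ is surjective onto $O^+_{\mi{Hodge}}(H^*(X,\bb Z))$ by Theorem \ref{HMSthm}, the image of $M$ is exactly $q$ applied to the set of restrictions $\Phi^H|_{N_d}$, $\Phi^H \in O^+_{\mi{Hodge}}(H^*(X,\bb Z))$. I will show this restricted image is squeezed between $O^+(N_d)^*$ and $O^+(N_d)^*\cdot\{\pm\mr{id}_{N_d}\}$. Since $-\mr{id}_{N_d}$ lies in $O^+(N_d)$ (it acts by $-\mr{id}$, hence with determinant $+1$, on the positive-definite two-plane of $N_{d,\bb R}$) and $q(-\mr{id}_{N_d})=\mr{id}$, while $q(O^+(N_d)^*)=\mr{Fr}_d$ by Dolgachev's Theorem \ref{Dolthm}, this will yield $\mr{Im}\,M=\mr{Fr}_d$ at once.

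For the lower bound, fix $g\in O^+(N_d)^*$ and use the canonical identification $\mca N(X)=N_d$; write $T(X)$ for the orthogonal complement of $\mca N(X)$ in $H^*(X,\bb Z)$, the transcendental lattice, so that $\mca N(X)\+T(X)$ is a finite-index sublattice of the unimodular lattice $H^*(X,\bb Z)$. Because $g$ acts trivially on the discriminant group $A_{N_d}$ and $\mr{id}_{T(X)}$ acts trivially on $A_{T(X)}$, Nikulin's gluing criterion lets me extend $g\+\mr{id}_{T(X)}$ to an isometry $\tilde g$ of $H^*(X,\bb Z)$; it is a Hodge isometry since it is the identity on $T(X)\supseteq H^{2,0}(X)$, and it preserves the orientation of the positive-definite four-plane of $H^*(X,\bb R)$ since $g$ preserves the orientation of the positive-definite two-plane of $N_{d,\bb R}$ while $\mr{id}_{T(X)}$ preserves that of $T(X)_{\bb R}$. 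Thus $\tilde g\in O^+_{\mi{Hodge}}(H^*(X,\bb Z))$, and Theorem \ref{HMSthm} produces $\Phi\in\Aut(D(X))$ with $\Phi^H=\tilde g$, whence $M(\Phi)=q(g)$.

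For the upper bound, any $\Phi^H\in O^+_{\mi{Hodge}}(H^*(X,\bb Z))$ respects the Hodge decomposition, hence preserves $\mca N(X)$ and $T(X)$ and restricts to an isometry $g$ of $N_d$ and a Hodge isometry $h$ of $T(X)$. The crucial point — and where $\rho(X)=1$ enters — is that $O_{\mi{Hodge}}(T(X))=\{\pm\mr{id}_{T(X)}\}$, so that $h=\pm\mr{id}_{T(X)}$. Indeed, $h$ has finite order (it preserves the positive-definite two-plane of $T(X)_{\bb R}$ and the negative-definite complement); if $h$ acted on $H^{2,0}(X)$ by a root of unity $\zeta\neq\pm1$, then the span of the $\zeta$-eigenspace and its conjugates would be a $\bb Q(\zeta)$-rational sub-Hodge-structure of $T(X)_{\bb Q}$ whose orthogonal complement, being of pure type $(1,1)$ and rational, would consist of algebraic classes orthogonal to $\mr{NS}(X)$, hence vanish; so $T(X)_{\bb Q}$ itself would be a $\bb Q(\zeta)$-vector space, contradicting that $\rank T(X)=21$ is odd. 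Once the action on $H^{2,0}(X)$ is $\pm1$, the same argument applied to any further eigenvalue $\neq1$ forces $h=\pm\mr{id}_{T(X)}$. Consequently $h$ acts on $A_{T(X)}$ by $\pm\mr{id}$, so the gluing (matching $A_{T(X)}$ with $A_{N_d}$) forces $g$ to act on $A_{N_d}$ by $\pm\mr{id}$, i.e. $g\in O^+(N_d)^*\cdot\{\pm\mr{id}_{N_d}\}$. Applying $q$ and Theorem \ref{Dolthm} then gives $\mr{Im}\,M=q(O^+(N_d)^*)=\mr{Fr}_d$.

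The main obstacle is precisely the identification $O_{\mi{Hodge}}(T(X))=\{\pm\mr{id}_{T(X)}\}$: the combination of the arithmetic of cyclotomic polynomials with the absence of nonzero rational $(1,1)$-classes in $T(X)$ (a consequence of the Lefschetz $(1,1)$ theorem together with the non-degeneracy of $\mr{NS}(X)_{\bb Q}$) is what makes it work, and here the hypothesis $\rho(X)=1$ is indispensable through the odd rank of $T(X)$. Everything else — the gluing and discriminant-form bookkeeping, the orientation check, and the passage through $q$ — is routine given Theorems \ref{HMSthm} and \ref{Dolthm}.
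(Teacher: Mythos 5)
Your proof is correct and follows essentially the same route as the paper: for $\mr{Fr}_d\subset \mr{Im}(M)$ you extend $\varphi\+\mr{id}_{T(X)}$ to an orientation-preserving Hodge isometry of $H^*(X,\bb Z)$ and invoke Theorem \ref{HMSthm}, and for $\mr{Im}(M)\subset \mr{Fr}_d$ you use that the action on $T(X)$ is $\pm\mr{id}$, transfer this through the discriminant-group identification $A_{T(X)}\cong A_{\mca N(X)}$, and apply Theorem \ref{Dolthm}. The only deviations are cosmetic: where the paper cites \cite[Lemma 4.1]{Ogu} for $\Phi^H|_{T(X)}=\pm\mr{id}$ you reprove it via the odd-rank/cyclotomic argument, and you absorb the sign with $-\mr{id}_{N_d}$ instead of composing with the shift $[1]$.
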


\begin{proof}
We first show $\mr{Im}(M) \subset \mr{Fr}_d$. 
By Theorem \ref{Dolthm}, it is enough to show that $\Phi^H|_{\mca N(X)}$ is in $\< O^+(\mca N(X))^*,  \pm\mr{id}\> \subset O^+(N_d)$. 

Since $\rho (X) =1$, the restriction $\Phi ^H | _{T(X)}$ to the transcendental lattice $T(X) (\subset H^*(X, \bb Z))$ is $\pm \mr{id}_{T(X)}$ by the result of Oguiso \cite[Lemma 4.1]{Ogu}.
Moreover since a single shift $[1]$ is in the kernel of $M$, we may assume ${\Phi}^H  | _{T(X)} = \mr{id}_{T(X)} $ by composing a single shift. 
We note that $\Phi ^H | _{T(X)}$ induces the identity on the discriminant lattice of $T(X)$. 
Since the discriminant lattice $A_{T(X)} = T(X)^{\vee}/T(X)$ of $T(X)$ is  canonically isomorphic the discriminant lattice $A_{\mca{N}(X)}$ of $\mca N(X)$, 
$\Phi^H | _{\mca{N}(X)}$ is in $O^+(\mca N(X))^*$. 
Hence $\mr{Im}(M)$ contained in $\mr{Fr}_d$ by Theorem \ref{Dolthm}.  

Conversely we show $\mr{Fr}_d \subset \mr{Im}(M)$. 
Take an arbitrary $\varphi \in O^+(N(X))^*$. 
Then $\varphi \+ \mr{id}_{T(X)}$ extends to the isometry $\tilde{\varphi}$ on the hole lattice $H^*(X, \bb Z)$. 
Since $\varphi$ preserves the orientation of $\mca N(X)$, $\tilde{\varphi}$ also preserves the orientation of $H^*(X, \bb Z)$. 
Since the natural representation $\Aut (D(X)) \to O^+(H^*(X, \bb Z))$ is surjective by Theorem \ref{HMSthm}, there is an autoequivalence $\Phi$ such that $\Phi ^H = \tilde{\varphi}$. 
\end{proof}


\section{Main result and the proof}

We first remark that the set of all Fourier-Mukai transformations has naturally a groupoid structure. 
Namely we define the following:

\begin{dfn}\label{groupoid}
Let $M$ be a projective manifold. 
We define the groupoid $\FM_M$ as follows: 
\begin{itemize}
\item Objects of $\FM_M$ consist of Fourier-Mukai partners of $X$:
\[
\mr{Ob}(\FM_M) = \{ W:\text{projective manifold}|\exists \Phi \colon D(W) \stackrel{\sim}{\to} D(M)\}. 
\]
\item Morphisms in $\FM_M$ are Fourier-Mukai transformations beween them:
\[
\mr{Mor}_{\FM_M}(W, W') = \{ \Phi : D(W) \stackrel{\cong}{\to} D(W'), \text{FM transformations on }M   \}
\]
\end{itemize}
Since any Fourier-Mukai transformation gives $\Phi:D(W) \to D(W')$ a morphism of $\FM_M$, we write as $\Phi \in \FM_M$. 
We call $\FM_M$ the \textit{groupoid of Fourier-Mukai transformations} on $M$ (or shortly \textit{Fourier-Mukai groupoid}). 
\end{dfn}

Now suppose that $M=X$ is a projective K3 surface with $\rho (X) =1$. 
Let $\Phi :D(Y) \to D(Y')$ be in $\FM_X$. 
Since the numerical Grothendieck groups of $Y$ and $Y'$ are canonically isomorphic to the abstract lattice $N_d$, the equivalence $\Phi$ induces the orientation preserving isometry $\Phi^N$ on $N_d$. 
Namely we have the functor from the groupoid to the isometry group of $N_d$ by using these canonically isomorphisms:
\[
\rho' :\FM_X \to O^+(N_d), \Phi \mapsto \Phi^N. 
\]

\begin{rmk}
By composing the morphism $q: O^+(N_d) \to \mi{PSL}_2(\bb R)$, we obtain the following functor
\[
M = q \circ \rho ' : \FM_X \to \mi{PSL}_2(\bb R), \Phi \mapsto q(\Phi^N). 
\]
Since the restriction of $q \circ \rho '$ to $\Aut (D(X))$ is the same as the group morphism $M: \Aut (D(X)) \to \mi{PSL}_2(\bb R)$, we put $M = q \circ \rho '$ by abusing notations. 
By the definition of the functor $M$, we see $M(\Phi)$ is just the linear fractional transformation $\Phi_*$ ginve in Proposition \ref{Kawlem}
\end{rmk}

\begin{thm}\label{surjective}
Let $\FM_X$ be the Fourier-Mukai groupoid on a K3 surface $X$ with $\rho (X)=1$. 
We put $\mr{NS}(X) = \bb Z L$ with $L^2 = 2d$. 
\begin{enumerate}
\item The functor $M \colon \FM_X \to \mr{AL}_d$ is surjective. Namely for any $\varphi \in \mr{AL}_d$, there exists a Fourier-Mukai transformation $\Phi \colon D(Y) \to D(X)$ in $\FM_X$ such that $M(\Phi) =\varphi$. 
\item For $\Phi\colon D(Y) \to D(Y') \in \FM_X$, $Y$ is isomorphic to $Y'$ if and only if $M(\Phi ) \in \mr{Fr}_d$. 
\end{enumerate}
\end{thm}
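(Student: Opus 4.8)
The plan is to reduce everything to the explicit description of $M(\Phi) = \Phi_*$ in Proposition \ref{Kawlem}, combined with the classification of Fourier-Mukai partners in Theorem \ref{HLOYathm} and Dolgachev's identification in Theorem \ref{Dolthm}. For part (1), since $\mr{AL}_d = \bigsqcup_{s \| d} W_s$ and $\mr{Fr}_d = W_1 \sqcup W_d$, and since $M|_{\Aut(D(X))}$ already surjects onto $\mr{Fr}_d$ by Proposition \ref{DHMS}, it suffices to produce, for each $s \| d$, one Fourier-Mukai transformation $\Phi \colon D(Y) \to D(X)$ with $M(\Phi) \in W_s$; the full coset $W_s$ is then swept out by postcomposing with the autoequivalences of $X$ realizing $\mr{Fr}_d$ (using $W_s W_1 = W_s$ and $W_s W_d = W_{d/s}$, together with the remark on the coset decomposition of $\mr{AL}_d/\mr{Fr}_d$). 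So fix $r \| d$ and let $Y = M_L(r \+ L \+ \tfrac{d}{r})$, a Fourier-Mukai partner of $X$ by Theorem \ref{HLOYathm}; let $\Phi \colon D(Y) \to D(X)$ be the associated equivalence (the universal family on $Y \times X$ as Fourier-Mukai kernel). The skyscraper $\mca O_y$ for $y \in Y$ corresponds to a $\mu_L$-stable sheaf on $X$ with Mukai vector $r \+ L \+ \tfrac{d}{r}$, so in the notation preceding Proposition \ref{Kawlem} we have $r_X = r$ and $n_X = 1$. By the symmetry of the construction (the moduli-space duality $M_L(r \+ L \+ \tfrac{d}{r})$ for $X$ is mirrored on the $Y$ side), $\Phi^{-1}(\mca O_x)$ likewise has Mukai vector with $r_Y = r$ and $n_Y = 1$, so Proposition \ref{Kawlem}(2) gives
\[
\Phi_*(\tau) = \frac{1}{dr}\cdot\frac{-1}{\tau - \tfrac{1}{r}} + \frac{1}{r},
\]
which I would now verify lies in $W_r$ by writing it as a matrix in $\mi{PSL}_2(\bb R)$ and matching it against the second presentation of $W_s$ in the text (the entries $\alpha\sqrt{s}$, $\beta/\sqrt{s}$, $\gamma \tfrac{d}{s}\sqrt{s}$, $\delta\sqrt{s}$ with $\alpha,\beta,\gamma,\delta \in \bb Z$). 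Explicitly the matrix is a scalar multiple of $\begin{pmatrix} r & -1 \\ dr & 0 \end{pmatrix}$ up to the shift; normalizing the determinant to $1$ forces the $\tfrac{1}{\sqrt r}\begin{pmatrix}\ast&\ast\\ \ast&\ast\end{pmatrix}\begin{pmatrix}r&0\\0&1\end{pmatrix}$ shape, and one checks $\gamma \in \tfrac{d}{r}\bb Z$ and $\delta \in r\bb Z$. Since $r \mapsto W_r$ and $\tfrac{d}{r} \mapsto W_{d/r} = W_r W_d$ and these exhaust $\mr{AL}_d/\mr{Fr}_d$, surjectivity onto $\mr{AL}_d$ follows once each coset is hit.

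For part (2), the ``if'' direction is essentially Proposition \ref{DHMS}: if $M(\Phi) \in \mr{Fr}_d$, I want to conclude $Y \cong Y'$. Composing $\Phi$ with equivalences realizing $\mr{Fr}_d$ on $D(Y')$ (which exist by Proposition \ref{DHMS}, noting the argument there applies with $Y'$ in place of $X$ since $Y'$ is also a K3 with $\rho = 1$ of the same degree), we may assume $M(\Phi) = \mr{id}$, i.e. $\Phi^N$ is $\pm\mr{id}$ on $N_d$; then $\Phi$ (after a shift) is a Fourier-Mukai equivalence inducing a Hodge isometry that is the identity on $\mca N$, hence sends the ample class of $Y$ to the ample class of $Y'$, and by the derived Torelli theorem for K3 surfaces $Y \cong Y'$. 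For the ``only if'' direction: if $Y \cong Y'$ via $f$, then $f^*\circ \Phi \in \Aut(D(Y))$, and by Proposition \ref{DHMS} applied to $Y$, $M(f^*\circ\Phi) \in \mr{Fr}_d$; since $f$ itself induces an element of $O^+(N_d)$ mapping to $\mr{Fr}_d$ (it is an isometry fixing the ample cone, so lands in $O^+(N_d)^*$, whence in $\mr{Fr}_d$ by Theorem \ref{Dolthm}), and $\mr{Fr}_d$ is a group, we get $M(\Phi) \in \mr{Fr}_d$. Alternatively, and more cleanly, combine part (1) with Theorem \ref{HLOYathm} and the Hosono--Lian--Oguiso--Yau index count $[\mr{AL}_d : \mr{Fr}_d] = |\mr{FM}_X| = |P_d|$: the functor $M$ sends $\FM_X$ onto $\mr{AL}_d$, restricts on each $\Aut(D(Y))$ to a map onto $\mr{Fr}_d$, and the correspondence $r \mapsto M_L(r\+L\+\tfrac{d}{r})$ matches the cosets $\mr{AL}_d/\mr{Fr}_d$ bijectively, so two objects of $\FM_X$ are isomorphic exactly when the corresponding cosets coincide, i.e. exactly when the morphism between them maps into $\mr{Fr}_d$.

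The main obstacle I anticipate is not any single hard step but the bookkeeping in part (1): verifying that the moduli space $M_L(r \+ L \+ \tfrac{d}{r})$ really yields a \emph{fine} moduli space with a universal family (so that the Fourier-Mukai kernel exists and $\Phi$ is genuinely an equivalence — this is the content of Theorem \ref{HLOYathm}, but one must check the $\mu_L$-stability hypotheses are met, which uses $\gcd(r, \tfrac{d}{r}) = 1$ from $r \| d$), and — more delicately — confirming that on the $Y$ side the object $\Phi^{-1}(\mca O_x)$ has Mukai vector with leading coefficient $r$ and middle coefficient $1$. This last point requires knowing that the ``mirror'' moduli description holds symmetrically, i.e. that $X$ is recovered from $Y$ as $M_{L_Y}(r \+ L_Y \+ \tfrac{d}{r})$ with the same $r$; this symmetry is built into the Hosono--Lian--Oguiso--Yau construction but deserves an explicit sentence. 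Once the Mukai vectors are pinned down, the rest is the routine $\mi{PSL}_2$ matrix identification sketched above, and the group-theoretic wrap-up using the coset rules $W_s^2 = W_1$, $W_s W_{s'} = W_{s*s'}$.
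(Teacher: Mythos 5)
Your part (1) contains a genuine gap at exactly the point you flagged as delicate: the claim that, by ``symmetry of the construction,'' $v(\Phi_{\mca E}^{-1}(\mca O_x))$ has middle coefficient $n_Y=1$. This is false in general. The universal family is only determined up to a twist by a line bundle pulled back from $Y$, and the restriction $\mca E|_{Y\times\{x\}}$ is not a sheaf with Mukai vector $r\+L_Y\+\frac{d}{r}$; its middle coefficient $n_Y$ is a priori unknown. The integrality you need for membership in the relevant coset is $\frac{r+dn_Y}{r^2}=\frac{1+sn_Y}{r}\in\bb Z$ (with $s=\frac{d}{r}$), i.e.\ $sn_Y\equiv -1\pmod r$; with $n_Y=1$ this forces $s\equiv -1\pmod r$, which already fails for $d=12$, $r=3$, $s=4$. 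So the matrix check you sketch cannot be completed from $n_Y=1$. The paper's proof leaves $n_Y=n$ undetermined, writes $v(\Phi_{\mca E}^{-1}(\mca O_x))=r\+nL_Y\+s'$, and extracts the needed integrality from lattice theory: since $(\Phi_{\mca E}^{-1})^N=\pm R\circ M(\Phi_{\mca E}^{-1})$ lies in $O^+(N_d)$, the $3\times 3$ matrix $R\circ M(\Phi_{\mca E}^{-1})$ has integer entries, and its $(2,1)$ entry is precisely $\frac{r+dn}{r^2}$. This step is the actual content of part (1) and is missing from your argument. A smaller point: with $n_X=1$ the resulting element lies in $W_{d/r}$, not $W_r$ as you assert (the $(1,1)$ entry is $\sqrt{d/r}$, not an integer multiple of $\sqrt r$); this is harmless for surjectivity because $r\mapsto d/r$ permutes the cosets modulo $\mr{Fr}_d$, but your matching against the $W_r$ shape is off.

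On part (2) your route differs from the paper's: you reduce to $M(\Phi)=\mr{id}$ by composing with autoequivalences of $Y'$ and then invoke a Torelli-type statement, whereas the paper transports $\Phi$ to the moduli models $M_{L_X}(r_i\+L_X\+s_i)$, conjugates by the equivalences $\Psi_i$ produced in part (1), and concludes by the coset arithmetic $W_{s_1}W_{s_2}=W_{s_1*s_2}$ together with $M_L(r\+L\+s)\cong M_L(s\+L\+r)$. Your ``if'' direction is workable, but the step ``$\Phi^N=\pm\mr{id}$ implies $Y\cong Y'$'' should be justified by Proposition \ref{Kawlem}(1) (the rank-zero case) rather than an unspecified ``derived Torelli theorem,'' and the alternative argument via the index count $[\mr{AL}_d:\mr{Fr}_d]=|P_d|$ is circular as stated, since matching cosets to partners is exactly what must be proved. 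In any case, because the paper's (and your) part (2) leans on the explicit equivalences from part (1), the gap identified above must be repaired first.
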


\begin{proof}
Recall Proposition \ref{DHMS}. 
By this proposition, it is enough to show that for any $s || d$, there is a Fourier-Mukai transformation $\Phi \colon D(Y) \to D(X)$ such that $M(\Phi) \in W_s$.

For the integer $s$, we put $r = \frac{d}{s}$ and take an isotropic Mukai vector $v\in \mca N(X)$ as $v= r \+ L_X \+ s$. 
Then there exists the fine moduli spaces $M_L(r \+ L \+ s)$ of $\mu$-stable sheaves with Mukai vector $v = r\+ L \+ s$ since $\gcd (r, L_X^2, s )=1$. 
We put $Y = M_L(r \+ L \+ s)$ and let $\mca E$ be the universal family of the moduli space. 
We claim that the Fourier-Mukai transformation $\Phi _{\mca E}: D(Y) \to D(X)$ satisfies $M(\Phi_{\mca E} ) \in W_{s}$ where 
\[
\Phi _{\mca E}(-) \colon D(Y) \to D(X), \Phi_{\mca E}(-)= \bb R \pi_{X *}(\mca E \stackrel{\bb L}{\otimes} \pi _{Y}^*(-)) . 
\]

Put $v(\Phi _{\mca E}^{-1}(\mca O_x))  = r \+ n L_Y \+ s'$. 
By Proposition \ref{Kawlem} the linear fractional transformation $M(\Phi _{\mca E})$ is given by the following matrix:
\[
M(\Phi_{\mca E}) = 
\begin{pmatrix}
\sqrt{\frac{d}{r}} & -\sqrt{\frac{r}{d}} \frac{r + d n}{r^2} \\
r \sqrt{\frac{d}{r}} & -n \sqrt{\frac{d}{r}}
\end{pmatrix}.
\]
To prove our claim it is enough to show that $\frac{r+ dn}{r^2}$ is an integer. 
To show this, we consider the inverse Fourier-Mukai transformation $\Phi _{\mca E}^{-1}: D(X) \to D(Y)$. 
Then the matrix $M(\Phi_{\mca E}^{-1})$ is given by 
\[
M(\Phi_{\mca E}^{-1}) = 
\begin{pmatrix}
n\sqrt{\frac{d}{r}} & -\sqrt{\frac{r}{d}} \frac{r + d n}{r^2} \\
r \sqrt{\frac{d}{r}} & -\sqrt{\frac{d}{r}}
\end{pmatrix}.
\]
Since $(\Phi_{\mca E}^{-1})^N = \pm R\circ M(\Phi _{\mca E}^{-1}) \in O^+(N_d)$, all coefficient of the $3 \times 3$ matrix of $R \circ M(\Phi _{\mca E}^{-1})$ should be integers. 
By focusing $(2,1)$ component of $R \circ M(\Phi _{\mca E}^{-1})$ we see that \[
\sqrt{\frac{r}{d}} \frac{r + dn}{r^2} \times \sqrt{\frac{d}{r}} = \frac{r + dn}{r^2}
\]
is an integer. 
This gives the proof of the first assertion.  

Now we prove the second assertion. 
Let $\Phi \colon D(Y_1) \to D(Y_2) \in \FM_X$. 
By Theorem \ref{HLOYathm} we can assume $Y_i \stackrel{f_i}{\cong}  M_i = M_{L_X}(r_i \+ L_X \+ s_i)$ ($i=1,2$) with $r_i || d$. 
By using these isomorphisms we get the Fourier-Mukai transformation
\[
\Phi ' = f_{2*} \circ \Phi  \circ f_{1*}^{-1}\colon D(M_1) \to D(M_2). 
\]
We note that $M(\Phi ')= M(\Phi)$ since $M(f_{i*}) = \mathrm{id}$ ($i=1,2$). 

By the proof of the first assertion we see that there is an equivalence $\Psi_i:D(M_i)\to D(X)$ such that $M(\Phi _i) \in W_{\frac{d}{r_i}}$. 
Then we get the following commutative diagram:
\[
\begin{CD}
D(M_1) @>\Phi'>> D(M_2) \\
@V\Psi _1VV	@VV\Psi_2V \\
D(X) @>>\Psi_2 \cdot \Phi' \cdot \Psi _1^{-1}> D(X)
\end{CD}
\]
Since $\tilde \Phi  = \Psi_2 \circ \Phi' \circ \Psi _1^{-1}$ is an autoequivalence, $M(\tilde \Phi) \in W_1 \sqcup W_d$. 
In particular by composing an equivalence $T \in \Aut (D(X))$ so that $M(T) \in W_d$, we can assume that $M(\tilde \Phi ) \in W_1$. 
Since $\Phi \in W_1 \sqcup W_d$, we have to consider two cases: 
If $\Phi \in W_1$ then we have 
\[
W _{\frac{d}{r_1}} W_1 W_{\frac{d}{r_2}} = W _{s_1}\cdot  W_{s_2}  = W_1. 
\]
Hence we see $s_1 = s_2$ and $r_1 =r_2$. Thus $Y_1 = Y_2$.

If $\Phi \in W_{d}$ then 
\[
W _{\frac{d}{r_1}} W_d W_{\frac{d}{r_2}} = W _{r_1}\cdot  W_{s_2}  = W_1. 
\]
Thus we see $r_1 = s_2$. 
Since $M(r \+ L \+ s) \cong M(s \+ L \+r)$ by Theorem \ref{HLOYathm}, we see $Y_1 \cong Y_2$. 
Thus we have proved the second assertion. 
\end{proof}

By combining Proposition \ref{DHMS}, we obtain the following corollary.  

\begin{cor}\label{main result}

The following functor is surjective$:$
\[
\tilde {\rho} = |_{N_{d}} \circ \rho : \FM_X \ni \Phi \mapsto \Phi ^N \in O^+(N_d). 
\]
\end{cor}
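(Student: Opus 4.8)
The plan is to deduce the surjectivity of $\tilde\rho = |_{N_d}\circ\rho$ from Theorem \ref{surjective} together with the description of $\mr{AL}_d$ as $q(O^+(N_d))$ in Dolgachev's Theorem \ref{Dolthm}. The key observation is that the functor $M\colon\FM_X\to\mi{PSL}_2(\bb R)$ factors as $M = q\circ\rho'$, where $\rho' = \tilde\rho$ (up to the canonical identification $\mca N(X)\cong N_d$) and $q\colon O^+(N_d)\to\mi{PSL}_2(\bb R)$ is the composite $O^+(N_d)\to O^+(N_d)/\pm\mr{id}\xrightarrow{\sim}\mi{SO}^+(N_d)\xrightarrow{R^{-1}}\mi{PSL}_2(\bb R)$. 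Since $q$ has kernel exactly $\{\pm\mr{id}_{N_d}\}$, to hit an arbitrary $\psi\in O^+(N_d)$ by $\tilde\rho$ it suffices to hit the pair $\{\psi,-\psi\}$, i.e. to realize $q(\psi)$ in the image of $M$, \emph{and} to observe that $-\mr{id}_{N_d}$ itself lies in the image of $\tilde\rho$ (so that once we have one preimage of $q(\psi)$ we can flip its sign if necessary).

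So the steps are as follows. First I would fix $\psi\in O^+(N_d)$ and set $\varphi = q(\psi)\in q(O^+(N_d)) = \mr{AL}_d$ by Theorem \ref{Dolthm}. By Theorem \ref{surjective}(1) there is a Fourier-Mukai transformation $\Phi\colon D(Y)\to D(X)$ in $\FM_X$ with $M(\Phi) = \varphi$, i.e. $q(\Phi^N) = q(\psi)$. Hence $\Phi^N = \pm\psi$ in $O^+(N_d)$. Second, if $\Phi^N = \psi$ we are done. If $\Phi^N = -\psi$, I would precompose with an autoequivalence realizing $-\mr{id}_{N_d}$: concretely, the shift functor $[1]\in\Aut(D(X))$ satisfies $[1]^N = -\mr{id}_{\mca N(X)}$ (since $v(E[1]) = -v(E)$), so $[1]\in\FM_X$ is a morphism $D(X)\to D(X)$ with $\tilde\rho([1]) = -\mr{id}_{N_d}$. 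Then $\Phi\circ[1]\colon D(Y)\to D(X)$ lies in $\FM_X$ and $(\Phi\circ[1])^N = \Phi^N\circ(-\mr{id}) = (-\psi)(-\mr{id}) = \psi$. This exhibits $\psi$ in the image of $\tilde\rho$, proving surjectivity.

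I would also remark that this argument shows a little more, matching the final sentence of the introduction: restricting $\tilde\rho$ to $\Aut(D(X)) = \mr{Mor}_{\FM_X}(X,X)$ recovers $\rho|_{\mca N(X)}$, whose image is $O^+(\mca N(X))^*\cdot\{\pm\mr{id}\}$ mapping onto $\mr{Fr}_d$ under $q$ by Proposition \ref{DHMS}; the extra Fourier-Mukai partners $Y = M_{L}(r\+ L\+\tfrac{d}{r})$ supply exactly the cosets needed to cover all of $O^+(N_d)$, in accordance with Theorem \ref{surjective}(2) and the coset decomposition $\mr{AL}_d/\mr{Fr}_d = \bigsqcup_{s||d}(W_s\sqcup W_{d/s})$.

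There is no real obstacle here: the only point requiring a line of care is the sign ambiguity in passing from $M(\Phi) = q(\Phi^N)$ back to $\Phi^N$ itself, since $q$ is two-to-one with kernel $\{\pm\mr{id}_{N_d}\}$ — and this is precisely what composing with the shift $[1]$ resolves. Everything else is a formal consequence of Theorem \ref{surjective} and the commuting triangle $M = q\circ\rho'$ already set up in the preceding remark.
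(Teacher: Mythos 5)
Your proof is correct and follows the same route as the paper: factor $M = q\circ\tilde\rho$ through $O^+(N_d)$ and combine the surjectivity of $M$ onto $\mr{AL}_d$ (Theorem \ref{surjective}) with Dolgachev's identification $q(O^+(N_d))=\mr{AL}_d$ (Theorem \ref{Dolthm}). In fact your write-up is more careful than the paper's two-line deduction, which infers surjectivity of $\tilde\rho$ directly from the surjectivity of $M$ and $q$ without addressing the ambiguity coming from $\ker q = \{\pm\mr{id}_{N_d}\}$; your extra step --- noting that the shift $[1]$ realizes $-\mr{id}_{N_d}$, so a preimage of $q(\psi)$ can be corrected to a preimage of $\psi$ --- is exactly what is needed to make that inference airtight.
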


\begin{proof}
Recall that he functor $M \colon \FM_X \to \mi{PSL}_2(\bb R)$ factors through $O^+(N_d)$. 
Hence we obtain the following commutative diagram:
\[
\xymatrix{
\FM_X \ar[rr]^{M} \ar[rd]_{\tilde{\rho}} &		& \mr{AL}_d \\
			&O^+(N_d)\ar[ur]_q&
}
\] 
Since $M$ and $q$ are surjective by Theorems \ref{Dolthm} and \ref{surjective}, we see $\tilde \rho$ is also surjective. 
\end{proof}

\begin{rmk}
Corollary \ref{main result} can be regarded as the generalization of Theorem \ref{HMSthm}. 
Furthermore to generalize our result to arbitrary Picard rank cases, we have to find some canonical identification of numerical Grothendieck groups between Fourier-Mukai partners. 
\end{rmk}

\end{document}